\newtheorem{theorem}{Theorem}[section]
\newtheorem{corollary}[theorem]{Corollary}
\newtheorem{lemma}[theorem]{Lemma}
\newtheorem{proposition}[theorem]{Proposition}
\theoremstyle{definition}
\newtheorem{definition}[theorem]{Definition}
\newtheorem{question}[theorem]{Question}
\newtheorem{remark}[theorem]{Remark}
\theoremstyle{remark}
\newcommand{\R}{\mathbb{R}}
\newcommand{\N}{\mathbb{N}}
\newcommand{\Q}{\mathbb{Q}}
\newcommand{\Z}{\mathbb{Z}}
\newcommand{\Fix}{\mathrm{Fix}}
\newcommand{\Acc}{\mathrm{Acc}}
\newcommand{\T}{\mathsf{T}}
\title{
Classification complexity of chaotic systems
%On conjugacy of transitive resp. minimal homeomorphisms on the Hilbert cube, the Cantor set resp. tame cantoroids
%various base spaces
} 
\author{
Benjamin Vejnar\footnote{https://orcid.org/0000-0002-2833-5385} \footnote{
Supported by the grant GACR 24-10705S
}\\
Department of Mathematical Analysis\\ 
Faculty of Mathematics and Physics, Charles University\\
Prague, Czechia\\
E-mail: vejnar@karlin.mff.cuni.cz}
\begin{document}
\maketitle

\renewcommand{\thefootnote}{}

\footnote{2020 \emph{Mathematics Subject Classification}: Primary: 37C15
 Secondary: 03E15, 54B20.}

\footnote{\emph{Key words and phrases}: minimal map, transitive map, Hilbert cube, Cantor space, zero-dimensional space, conjugacy, classification, Borel reduction, countable structures, universal orbit equivalence relation.}

\renewcommand{\thefootnote}{\arabic{footnote}}
\setcounter{footnote}{0}

\begin{abstract}
In this paper, we deal with the classification complexity of continuous (Devaney) chaotic systems in dimensions $0,1$ and $\infty$ using the framework of invariant descriptive set theory. We identify the complexity in dimensions $0$ and $\infty$, while in dimension $1$ we get some partial results.

More precisely, we prove the topological conjugacy relation of invertible chaotic systems on the Hilbert cube (resp. on all compact metric spaces) has the same complexity as (i.e. is Borel bireducible with) the universal orbit relation induced by a Polish group.
As a consequence, this answers a recent question asked by L. Ding. We also prove that the topological conjugacy relation of invertible chaotic systems on the Cantor space
has the same complexity as the universal relation induced by the group $S_\infty$. This answers a recent question by M. Foreman.
Some non-trivial bounds on the classification complexity of chaotic systems on the interval and on the circle are also obtained. Namely, the lower bound is the Vitali equivalence relation, and the upper bound is the equality of countable sets of reals. This especially implies that the relation is Borel. However, the exact complexity remains unknown.

%We also prove that the topological conjugacy relation of minimal invertible systems on tame cantoroids (excluding the Cantor space) is Borel and thus this class of spaces is not suitable for identifying the classification complexity of minimal systems. This contrasts with another result, that the classification complexity of chaotic systems on cantoroids is the same as the universal relation induced by a Polish group.
\end{abstract}

\section{Introduction} 
% Klasifikace v dynamice a IDST

Classification of dynamical systems is one of the central problems in the field.
Perhaps the first nontrivial results of this kind go back to Poincaré, who classified transitive rotations of the circle by using the rotation number as a complete invariant \cite[Theorem 7.1.9]{BrinStuck}. Ornstein classified Bernoulli shifts using entropy as a complete invariant \cite{Ornstein}. However, it is rarely the case that a real number may serve as a \emph{natural} complete invariant for more involved systems.
Invariant descriptive set theory is a natural tool that allows us to formalize the concept of classification using more complicated objects as natural invariants \cite{Gao}.

Recently, the study of classification complexity (up to measurable or continuous conjugacy) on various classes of dynamical systems, from the perspective of invariant descriptive set theory, becomes of great interest (see, e.g. \cite{ForemanRudolphWeiss, ForemanDynamical, Buzzietc, Kunde, deka2025bowensproblem32conjugacy}). 
We focus on the classification of dynamical systems up to conjugacy by a homeomorphism. 
Clemens proved that the conjugacy of subshifts is as difficult as the universal Borel equivalence relation with countable classes \cite{Clemens}.
Hjorth identified the complexity of interval invertible systems and observed that the two-dimensional case is more complex \cite{Hjorth}.
Bruin and the author found that the conjugacy of interval systems is as difficult as classifying countable graphs up to isomorphism, whereas the conjugacy of systems based on the Hilbert cube is much more complex \cite{BruinVejnar}.

%Melleray \cite{Melleray} homeomorphism equivalence between orbit equivalences of minimal homeomorphisms on the Cantor space is Borel bireducible with $E_{S_\infty}$.

% Chaos in our paper
The aim of this paper is to focus on chaotic systems.
A chaotic system (in the sense of Devaney) is a classical dynamical system for which orbits of periodic points form a dense subset of the hyperspace of compact sets. 
Until now, no attention was paid to the classification complexity of these systems. 
Using some notation from Section 2, the main results of our investigation dealing with chaotic systems are formulated as follows:
\begin{itemize}
\item[{[A]}]   % dim 0
The classification complexity of chaotic invertible systems on the Cantor space is the same as the universal orbit relation induced by the permutation group $S_\infty$ (Theorem \ref{Cantor}).
\item[{[B]}]   % dim 1
The classification complexity of chaotic systems on the interval or on the circle is bounded from below by the Vitali equivalence relation and bounded from above by the universal $F_{\sigma\delta}$ relation induced by the group $S_\infty$
(Theorems \ref{conjugacyinterval} and \ref{transitivecircle} and Corollary \ref{cor:chaoticinterval}).
\item[{[C]}]  % dim 8
The classification complexity of chaotic invertible systems on the Hilbert cube is the same as the universal orbit relation induced by a Polish group (Theorem \ref{Hilbertcube}).
\end{itemize}

Let us note that
[A] answers one half of a question by Matthew Foreman \cite[Open problem 15]{ForemanDynamical}), whereas
[C] yields Corollary \ref{allcompacta} which answers a question of Longyun Ding posed during the Nankai Logic seminar.
Since the result [B] does not give a concrete complexity level, it would be nice to know the answer to the following question.

\begin{question}
What is the exact complexity of the conjugacy relation of chaotic systems on the interval, resp. on the circle? 
\end{question}

This question could be possibly answered by one of the complexity levels mentioned in Theorem [B] or some natural equivalence relations in between them, e.g. the countable power of the Vitali relation, the universal Borel relation with countable classes, or its countable power (see \cite[p. 351]{Gao}). But there are also others.

%%%%%%%%%%%%%%%%%%% MINIMAL

Let us now switch to minimal systems.
Recently, Kaya proved that the equality of countable sets $=^+$ relation is Borel reducible to the conjugacy of minimal Cantor systems \cite{Kaya}.
In a fresh paper \cite{LiPeng}, Li and Peng proved that the conjugacy equivalence relation of minimal systems (on compact metric spaces) cannot be classified by countable structures, i.e. it is not Borel reducible to $E_{S_\infty}$, which is the universal orbit relation induced by an action of the group $S_\infty$.
A group of authors announced that the conjugacy relation of Cantor minimal systems is not Borel \cite{Kwietniak}.
All of these are only partial results that tend to find the exact complexity of conjugacy of minimal homeomorphisms (either on the Cantor space or on all compact metrizable spaces). However, the exact classification complexity of minimal systems on compact metric spaces as well as on the Cantor set is still unknown.

%describe the motivation for results in this section.
By a conjecture of Sabok, the conjugacy relation of minimal invertible systems is Borel bireducible with the universal orbit equivalence relation $E_{G_\infty}$ \cite{LiPeng}. 
When trying to prove the conjecture of Sabok, we need to be able to construct various minimal systems. Minimal systems often occur on homogeneous continua (or compacta). However, such spaces are rare and their classification up to homeomorphism is unknown \cite[Problem 1]{Ciesla}. In fact, only recently it has been proven that homogeneous continua form an analytic set \cite{Krupski}.
The very first construction of a minimal system supported by a non-homogeneous compact space comes from Floyd \cite{Floyd}. 
His idea was recently enhanced by lifting any minimal Cantor space homeomorphism to a minimal homeomorphism of a cantoroid (i.e. a compact space whose degenerate components are non-isolated points and altogether form a dense set) whose nondegenerate components are topological copies of a prescribed space obtained from an injective iterated function system \cite{DeeleyPutnamStrung}. However, iterated function systems have finite Hausdorff dimension, and thus also finite topological dimension and additionally the complexity of the homeomorphism equivalence relation of such spaces is generally supposed to be strictly below $E_{G_\infty}$ (see \cite{DudakVejnarFinitedim, GaoChangFinitedim}). 
Since we moreover lack enough injective iterated function systems (see \cite{Dumitru} for countably many examples), we decided to move to other type of cantoroids as base spaces.

Let assign to each continuum $K$ a cantoroid $X_K$ in which nondegenerate components are homeomorphic to $K$ and form a dense null sequence. In Theorem \ref{tamecantoroids} it is shown that the assignment $K\mapsto X_K$ is a Borel reduction and thus the homeomorphism relation of such cantoroids is Borel bireducible with $E_{G_\infty}$.
Moreover, it is known that every cantoroid admits a minimal map \cite{BalibreaDownarowiczHricSnohaSpitalsky}, so we could hope that it will be possible to choose a minimal homeomorphism $h_K$ on every cantoroid $X_K$ to get a Borel reduction $K\mapsto (K, h_K)$. However, by Theorem
\ref{tamecantoroidsminimal}, the conjugacy relation on tame cantoroids (excepting the Cantor space) is Borel. 
This contrasts with the fact that $E_{G_\infty}$ (even $E_{S_\infty}$) is a non-Borel relation. Consequently, tame cantoroids are not suitable base spaces to prove the conjecture by Sabok.

As being true in the chaotic case, one could hope that the complexity of conjugacy relation of minimal systems is witnessed by using one concrete space. Since the Hilbert cube has the fixed point property, it does not admit minimal maps. A natural guess is the infinite-dimensional torus $\mathbb S^{\N}$.

\begin{question}
What is the complexity of conjugacy relation of minimal invertible systems based on an infinite-dimensional torus?
\end{question}

%As was explained in the introduction, there are some reasons to hope that for identifying the classification complexity of minimal systems, it is enough to consider minimal systems on tame cantoroids only. However, this expectation is false. 

When dealing with cantoroids, we prove the following complete result:

\begin{itemize}
    \item[{[D]}] The classification complexity of chaotic invertible systems on all cantoroids is that of $E_{G_\infty}$ (Theorem \ref{conjugacycantoroidschaotic}). 
\end{itemize}

%As an appendix of the paper, we show:
%\begin{itemize}
%    \item[{[E]}] Conjugacy of smooth interval systems is Borel bireducible with $E_{S_\infty}$ (Theorem \ref{thmsmooth}).
%\end{itemize}

%The result [E] is a strengthening of the results of M. Foreman and A. Gorodetski \cite{ForemanGorodetski} and is also using much simpler arguments.
%.................... \cite{DekaPeng} The conjugacy relation of generalized Oxtoby systems is hyperfinite

% Minimality in our paper
%If we relax the minimality condition to transitivity, we get the following complete results. 

\section{Preliminaries}

We denote by $\N, \Z, \Q, \R$ the sets $\{1,2,\dots\}$, the integers, rational numbers, and real numbers, respectively.
%$I=[0,1]$.

\subsection{Topology and descriptive set theory}
All spaces considered in this paper are topological metrizable and separable. A Polish space is a completely metrizable space.
A Borel set in a Polish space is any element of the sigma algebra generated by open sets. An analytic set is a continuous image of a Polish space (resp. an image of a Borel set with respect to a Borel measurable map).

A Cantor space is a compact non-empty zero-dimensional space which lacks isolated points. It is well known that every Cantor space is homeomorphic to the Cantor ternary set.
The Hilbert cube is the countable product $\prod_{n=1}^\infty[0,1]$ and is denoted by $Q$.
An absolute retract is a space homeomorphic to a retract of the Hilbert cube. A dendrite is an absolute retract with topological dimension one, or equivalently, a locally connected continuum that does not contain a topological copy of the circle as a subspace.
We denote by $\mathcal K(X)$ the space of all nonempty compact subsets of $X$ equipped with the Vietoris topology. See the books \cite{Kechris, Nadler} for more information.

\subsection{Dynamics}
A dynamical system is a pair $(X,T)$ where $X$ is a compact space and $T:X\to X$ is a continuous map. It is called invertible if $T$ is a homeomorphism. Two dynamical systems $(X,T)$ and $(Y, S)$ are called (topologically) conjugate if there is a homeomorphism $h:X\to Y$ such that $h\circ T=S\circ h$.
An orbit of a point $x\in X$ is the set $\{T^n(x):n\in\N\cup \{0\}\}$.
A dynamical system $(X, T)$ is called (topologically) transitive if for every pair $U, V$ of nonempty open sets there is $n\geq 0$ such that $T^n(U)\cap V\neq\emptyset$.
The system $(X, T)$ is called minimal if every point has a dense orbit.

Chaotic systems belong to the central notions of this paper. A dynamical system $(X, T)$ is called (Devaney) chaotic if for every finite cover $\mathcal U$ of $X$ by non-empty open sets, there is a periodic point $x\in X$ whose orbit meets every element of the cover $\mathcal U$. This is equivalent to saying that periodic orbits form a dense subset of $\mathcal K(X)$. It is also equivalent to the usual definition of (Devaney) chaotic map in the sense that it is a transitive map whose periodic points are dense.

\subsection{Invariant descriptive set theory}
\begin{definition}\label{DefinitionOfReduction}
Let $E$ and $F$ be equivalence relations on sets $X$ and $Y$, respectively.
A map $f \colon X \to Y$ is called a reduction from $E$ to $F$ if for every two points $x , x' \in X$ we have $x E x'$ iff $f(x) F f(x')$.
If $X$, $Y$ are Polish spaces and there is a Borel measurable reduction from $E$ to $F$, then we say that $E$ is Borel reducible to $F$ and write $E {\leq}_B F$. We say that $E$ is Borel bireducible with $F$ if $E {\leq}_B F$ and $F {\leq}_B E$.
\end{definition}

An equivalence relation that is Borel reducible to the equality of real numbers is called smooth.
The most simple non-smooth relation is the Vitali equivalence relation \[E_v=\{(x,y)\in\R^2:x-y\in\Q\}.\]
which is known to be Borel bireducible with the relation of eventual equality:
\[E_0=\{(x,y)\in\{0,1\}^{\N}:\exists n\forall m>n: x_m=y_m\}.\]
Given a class $\mathcal{C}$ of equivalence relations on Polish spaces and an element $E \in \mathcal{C}$, we say that $E$ is universal for $\mathcal{C}$ if $F {\leq}_B E$ for every $F \in \mathcal{C}$.
By the orbit equivalence relation induced by a group action $\varphi \colon G \times X \to X$ we understand the equivalence relation $E_G^X$ on $X$ given by
\[ x E_G^X x' \iff \exists \, g \in G : \varphi (g,x) = x' . \]

It is known that for every Polish group $G$ there exists an equivalence relation $E_G$ that is universal for the class of all orbit equivalence relations induced by Borel actions of $G$ on Polish spaces \cite[Theorem 5.1.8]{Gao}. Some of the most studied equivalence relations in the field of invariant descriptive set theory include $E_{S_{\infty}}$ and $E_{G_{\infty}}$, where $S_{\infty}$ stands for the symmetric group on $\N$ and $G_{\infty}$ stands for the universal Polish group. An equivalence relation on a Polish space is classifiable by countable structures if and only if it is Borel reducible to $E_{S_{\infty}}$  \cite[Theorem 2.39]{Hjorth}. The equivalence relation $E_{G_{\infty}}$ is universal for the class of all orbit equivalence relations induced by Borel actions of Polish groups on Polish spaces \cite[Theorem 5.1.9]{Gao}.
For a brief introduction to Borel reductions, see \cite{Foreman}.

Let us conclude that the most important complexity levels which we are using form a strictly increasing chain, where the two relations on the left-hand side are Borel sets, whereas the two relations on the right-hand side are complete analytic sets:
\[E_0 <_B E_{=^+}<_B E_{S_\infty} <_B E_{G_\infty}.\]

\section{Chaotic invertible systems}
%The general key idea which is used to prove results $[A]$ and $[C]$ from the Introduction are using the general reduction $K\mapsto (K^{\Z}, \sigma_K)$ of the homeomorphism relation of some compact spaces to the conjugacy relation of the corresponding shift systems. It is easy to see that it is really a reduction and ....

The following proposition by Burgess is a special kind of a selection theorem and it will serve as a useful tool to complete the Borel coding argument in Theorem \ref{Hilbertcube}. Let us note that similar usage of the Burgess selection theorem is realized also in \cite[Proposition 4.9]{BruinVejnar} with the aim to prove that the conjugacy relation of all Hilbert cube homeomorphisms is Borel bireducible with $E_{G_\infty}$.

\begin{proposition}[{\cite{Burgess}, \cite[Corollary 5.4.12]{Gao}}]\label{Burgesstheorem}
Let $G$ be a Polish group acting in a Borel way on a  Polish space. If the induced orbit equivalence relation $E_G^X$ is smooth, then it has a Borel selector.
\end{proposition}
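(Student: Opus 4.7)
The plan is to combine a classical Borel uniformization theorem with the Becker--Kechris change-of-topology technique to control the shape of the $G$-orbits. Since $E_G^X$ is smooth, I would first fix a Borel reduction $f \colon X \to 2^\omega$ witnessing this, so that the $G$-orbits are exactly the fibers $f^{-1}(y)$. A Borel selector for $E_G^X$ is then the same as a Borel map $g \colon f(X) \to X$ with $g(y) \in f^{-1}(y)$; composing would give the desired selector $s = g \circ f$.

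The next step is to invoke the Becker--Kechris theorem to refine the Polish topology on $X$ to a finer Polish topology $\tau$ that generates the same $\sigma$-algebra of Borel sets and in which the $G$-action is continuous. Smoothness of $E_G^X$, filtered through the Effros--Glimm analysis of orbit equivalence relations of Polish group actions, will allow one to arrange that every orbit is $G_\delta$ (in fact locally closed) in $(X,\tau)$ and hence Polish in the subspace topology. With the multifunction $y \mapsto f^{-1}(y)$ having Polish, locally closed sections in a Polish space, a suitable Borel selection principle (a variant of Kuratowski--Ryll-Nardzewski, after a further refinement that makes each orbit closed, or an Arsenin--Kunugui-type uniformization) will yield $g$. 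Extending Borel-measurably from the analytic set $f(X)$ to a Borel envelope is a routine Lusin-separation maneuvre.

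The principal obstacle I expect is the orbit-shape step: turning smoothness into enough topological control on orbits to apply a Borel selection theorem. This is where smoothness is genuinely used; without it, Effros--Glimm-type turbulence in the action would preclude any Borel selector. Once the orbits are tamed in the refined topology, the selector $g$ lifts back to the original Polish structure (since the Borel sets are unchanged), and $s = g \circ f$ completes the proof.
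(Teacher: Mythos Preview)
The paper does not prove this proposition; it is quoted as a known result with references to Burgess and to Gao's book (Corollary~5.4.12), so there is no in-paper argument to compare against. That said, your outline is essentially the standard proof one finds behind those references: pass to a finer Polish topology via Becker--Kechris so the action is continuous, use the Glimm--Effros dichotomy to deduce from smoothness that every orbit is $G_\delta$, push this to a further refinement in which every orbit is \emph{closed}, and then apply Kuratowski--Ryll-Nardzewski to the Borel map $x \mapsto G\cdot x$ into the Effros Borel space $F(X)$.

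Two remarks on the details. First, the detour through a map $g\colon f(X)\to X$ and a Lusin-separation extension is unnecessary: once orbits are closed and $x\mapsto G\cdot x$ is Borel (because $\{x:G\cdot x\cap U\neq\emptyset\}=G\cdot U$ is open for basic $U$), composing with a KRN selector for $F(X)$ already gives a Borel selector defined on all of $X$. Second, Arsenin--Kunugui is not the right tool here, since it needs $\sigma$-compact sections, which $G_\delta$ orbits need not be; KRN after the ``orbits closed'' refinement is the step that actually works. The real content you are leaning on is the theorem (in Becker--Kechris/Gao) that smoothness of $E_G^X$ permits a refinement making all orbits closed, and you should be explicit that this is where smoothness is cashed in rather than leaving it as a parenthetical.
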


\begin{theorem}\label{Hilbertcube}
The conjugacy equivalence relation of invertible chaotic systems on the Hilbert cube is Borel bireducible with $E_{G_\infty}$.
\end{theorem}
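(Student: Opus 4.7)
The plan is to establish the two Borel reducibilities separately.

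For the direction $\leq_B E_{G_\infty}$, the set $\mathcal{T}$ of transitive homeomorphisms of $Q$ is a $G_\delta$ subset of the Polish group $H(Q)$: fixing a countable base $\{U_n\}$ of $Q$, one has $h \in \mathcal{T}$ iff for every $n, m$ there exists $k \geq 0$ with $h^k(U_n) \cap U_m \neq \emptyset$, which is a countable intersection of open conditions in $h$. The group $H(Q)$ acts continuously on $\mathcal{T}$ by conjugation, and the orbit equivalence relation of this action is exactly the conjugacy relation on $\mathcal{T}$. By the universality of $E_{G_\infty}$ for orbit equivalence relations induced by Borel actions of Polish groups on Polish spaces \cite[Theorem 5.1.9]{Gao}, this direction follows.

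For the converse $E_{G_\infty} \leq_B$ (conjugacy on transitive homeomorphisms), I would invoke \cite{BruinVejnar}, where it is proved that conjugacy of arbitrary Hilbert cube homeomorphisms is Borel bireducible with $E_{G_\infty}$. It then suffices to Borel reduce conjugacy on all of $H(Q)$ to conjugacy on its subspace of transitive homeomorphisms, i.e.\ to produce a Borel map $h \mapsto T_h$ with values in $\mathcal{T}$ such that $T_h$ is conjugate to $T_{h'}$ iff $h$ is conjugate to $h'$. A natural template is to fix a reference transitive homeomorphism $S$ of $Q$ and to interlace $h$ with $S$ via a skew-product construction on $Q \times Q \cong Q$, tuned so that (i) $T_h$ is transitive and (ii) $h$ appears as a canonically identifiable invariant substructure of $T_h$. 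Concretely, one aims for $T_h$ to admit a distinguished compact nowhere dense invariant set $A_h$ on which $T_h \restriction A_h$ is conjugate to $h$, with $A_h$ isolated from the dynamics alone (for instance as the set of non-transitive points, or as a minimal center) so that any conjugacy of $T_h$ and $T_{h'}$ necessarily carries $A_h$ to $A_{h'}$ and thus descends to a conjugacy of $h$ and $h'$.

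The Borel coding step is where Proposition \ref{Burgesstheorem} is used: the construction sketched above typically involves non-unique choices (choosing the representative subsystem, parametrizing the embedding of $h$ into $T_h$, or identifying the canonical marked piece), and Burgess's selection theorem provides Borel selectors for the smooth auxiliary orbit equivalence relations that arise, thereby turning the construction into a Borel map. The main obstacle is the rigidity argument: one must show that the embedded copy of $h$ inside $T_h$ is a topologically canonical feature of $T_h$, so that every conjugacy of $T_h$ and $T_{h'}$ intertwines the embedded $h$ and $h'$. Arranging this while simultaneously forcing $T_h$ to be transitive (and not destroying the Borel dependence on $h$) is the delicate part, and it is here that the careful interplay between the reference transitive $S$, the fiber dynamics coming from $h$, and Burgess's theorem is essential.
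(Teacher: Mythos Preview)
Your upper bound $E\leq_B E_{G_\infty}$ is fine and matches the paper's citation of \cite[Theorem 7.1]{LiPeng}.

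The hard direction, however, is not proved in your proposal. You outline a strategy---embed an arbitrary $h\in H(Q)$ into a transitive $T_h$ so that $h$ is recoverable from a canonically identifiable invariant subset---but you do not construct $T_h$, do not verify transitivity, and you explicitly flag the rigidity step (``the main obstacle'', ``the delicate part'') without resolving it. Suggestions like ``the set of non-transitive points'' or ``a minimal center'' are not shown to isolate a copy of $(Q,h)$; in general the non-transitive locus of a skew product over a transitive base can be much larger and need not be invariant under an arbitrary conjugacy in the way you need. As written this is a wish list, not a proof.

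The paper's route avoids this difficulty entirely by changing the source of the reduction. Instead of starting from conjugacy of all Hilbert cube homeomorphisms, it starts from the homeomorphism relation $H$ on nondegenerate absolute retracts, which is already $E_{G_\infty}$-complete by \cite{KrupskiVejnar}. The reduction is $K\mapsto (K^{\mathbb Z},\sigma)$, the two-sided full shift. Since $K$ is a nondegenerate absolute retract, $K^{\mathbb Z}$ is homeomorphic to $Q$ \cite[Corollary~8.1.2]{vanMill}, and $\sigma$ is transitive. Rigidity is then trivial: any conjugacy $\varphi$ between $(K^{\mathbb Z},\sigma_K)$ and $(L^{\mathbb Z},\sigma_L)$ sends $\mathrm{Fix}(\sigma_K)$ onto $\mathrm{Fix}(\sigma_L)$, and these fixed-point sets are exactly the diagonals, homeomorphic to $K$ and $L$ respectively. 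Burgess's selection theorem is used only at the end, and for a much tamer purpose than you envisage: to choose in a Borel way a homeomorphism $i_K\colon Q\to K^{\mathbb Z}$, so that the target is literally $H(Q)$ rather than a varying copy of the Hilbert cube.
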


\begin{proof}
Denote by $E$ the conjugacy relation of invertible chaotic systems on the Hilbert cube.
By \cite{KrupskiVejnar}, the homeomorphism relation $H$ of absolute retracts is Borel bireducible with $E_{G_\infty}$. We may suppose that the relation $H$ is defined only on nondegenerate spaces.
Moreover $E\leq_B E_{G_\infty}$ by \cite[Theorem 7.1]{LiPeng}.
Thus it is enough to prove that $H\leq_B E$.
Let $f(K)=(K^{\mathbb Z}, \sigma)$ be the shift of a countable infinite power of $K$, where $K$ is a non-degenerate absolute retract.
It is known that a countable infinite power of a non-degenerate absolute retract is homeomorphic to the Hilbert cube \cite[Corollary 8.1.2]{vanMill} and that the shift map is transitive and has a dense set of periodic points.
Thus, $(K^{\mathbb Z}, \sigma)$ is a chaotic dynamical system on a Hilbert cube. 

We argue that $f$ is a reduction of $H$ to $E$. Thus, fix two absolute retracts $K, L\subseteq Q$.
We want to prove that $K$ is homeomorphic to $L$ if and only if $(K^{\mathbb Z}, \sigma)$ and $(L^{\mathbb Z}, \sigma)$ are conjugate. The direct implication is triviality. On the other hand, suppose that $(K^{\mathbb Z}, \sigma_K)$ and $(L^{\mathbb Z}, \sigma_L)$ are conjugate by a homeomorphism $\varphi: K^{\mathbb Z}\to L^{\mathbb Z}$, i.e. $\varphi\circ\sigma_K = \sigma_L\circ\varphi$. It follows that $\varphi(Fix(\sigma_K))=Fix(\sigma_L)$. Moreover $Fix(\sigma_K)$ consists of constant sequences in $K^{\mathbb Z}$ and thus is homeomorphic to $K$. Similarly $Fix(\sigma_L)$ is homeomorphic to $L$.
Consequently $K$ is homeomorphic to $L$. Together, $K$ is homeomorphic to $L$ if and only if $f(K)$ is isomorphic to $f(L)$.

The proof is not yet completely done, since we got a reduction to chaotic homeomorphisms on different Hilbert cubes. However, this can be fixed as follows.
Consider the Polish group $G:=\mathcal H(Q)$ of all Hilbert cube homeomorphisms acting on the Polish space $X:=\mathcal I(Q, Q^{\Z})$ of all injective maps by composition. Under this action, two maps from $\mathcal I(Q, Q^{\Z})$ are orbit equivalent iff they have the same images. Moreover, the equivalence relation $E_G^X$ is closed in $X\times X$ which makes the relation $E_G^X$ smooth \cite[Proposition 5.4.7]{Gao}.
Thus, by applying Proposition \ref{Burgesstheorem}, we get a Borel map assigning to every copy $K^{\Z}\subseteq Q^{\Z}$ of the Hilbert cube a homeomorphism $i_K: Q\to K^{\Z}$.
Defining $g(K):= i_K^{-1}\circ f(K)\circ i_K$ we get that $g$ is the desired Borel reduction of $H$ to $E$.
%On the other hand, by e.g. \cite{BruinVejnar}, the conjugacy relation $E$ is Borel reducible to $E_{G_\infty}$.
Consequently, $E\sim_B E_{G_\infty}$.
\end{proof}

\begin{corollary}\label{allcompacta}
The conjugacy relation of transitive systems on compact spaces is Borel bireducible with $E_{G_\infty}$.
\end{corollary}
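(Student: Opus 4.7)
Denote by $E$ the conjugacy equivalence relation of transitive homeomorphisms on compact metric spaces. The reduction $E_{G_\infty}{\leq}_B E$ is immediate from Theorem \ref{Hilbertcube}: transitive Hilbert cube homeomorphisms form a subclass of transitive compact systems, and any abstract conjugacy between two Hilbert cube systems is realized by a Hilbert cube homeomorphism, so the inclusion is a Borel reduction of the corresponding conjugacy relations.

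For the converse $E{\leq}_B E_{G_\infty}$, I would first code $E$ on the Polish space
\[
\mathcal{S}=\bigl\{(K,\Gamma)\in\mathcal{K}(Q)\times\mathcal{K}(Q\times Q):\Gamma\text{ is the graph of a transitive self-homeomorphism of }K\bigr\},
\]
which is Borel by a routine check, and then realize $E|_\mathcal{S}$ as an orbit equivalence relation of a Borel action of a Polish group on a Polish space; universality of $E_{G_\infty}$ \cite[Theorem 5.1.9]{Gao} then finishes the reduction. The natural candidate is the continuous action of $\mathcal{H}(Q)$ on $\mathcal{S}$ by $\varphi\cdot(K,\Gamma)=(\varphi(K),(\varphi\times\varphi)(\Gamma))$, whose orbit relation is however a priori weaker than $E|_\mathcal{S}$, since an abstract homeomorphism between two compact subsets of $Q$ need not extend to $Q$.

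To close this gap I would precompose with a Borel map sending each system to a conjugate one supported on a $Z$-set. Fix a homeomorphism $\eta\colon[0,1]\times Q\to Q$ and put $\iota(x)=\eta(0,x)$; since $\{0\}\times Q$ is a $Z$-set in $[0,1]\times Q$, the image $\iota(K)$ is a $Z$-set in $Q$ for every $K\in\mathcal{K}(Q)$. The Borel map $(K,h)\mapsto(\iota(K),\iota h\iota^{-1})$ lands in the invariant subspace $\mathcal{T}\subseteq\mathcal{S}$ of systems on $Z$-sets and is a reduction of $E|_\mathcal{S}$ to $E|_\mathcal{T}$, because conjugacy is preserved and reflected by a fixed bijective re-embedding. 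On $\mathcal{T}$, the classical $Z$-set unknotting theorem ensures that every abstract conjugacy extends to some $\varphi\in\mathcal{H}(Q)$, so $E|_\mathcal{T}$ coincides with the $\mathcal{H}(Q)$-orbit equivalence, giving $E|_\mathcal{T}{\leq}_B E_{G_\infty}$ and thus $E{\leq}_B E_{G_\infty}$.

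The principal non-bookkeeping step is the appeal to $Z$-set unknotting, which is what makes $\mathcal{H}(Q)$ rich enough to capture abstract conjugacy between $Z$-set systems; the remaining difficulties — Borel-ness of $\mathcal{S}$, $\mathcal{T}$, the embedding map, and the $\mathcal{H}(Q)$-action — are standard verifications in Hilbert cube topology.
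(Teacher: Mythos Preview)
Your argument is correct. For $E_{G_\infty}\leq_B E$ you proceed exactly as the paper does, via Theorem~\ref{Hilbertcube}. For the converse $E\leq_B E_{G_\infty}$, however, the paper simply quotes \cite[Theorem~7.1]{LiPeng}, whereas you reconstruct the underlying mechanism: push every system onto a $Z$-set by a fixed embedding and then invoke the $Z$-set unknotting theorem so that abstract conjugacy on $Z$-set systems coincides with the $\mathcal H(Q)$-orbit relation, which is $\leq_B E_{G_\infty}$ by universality. Your route is more self-contained and makes transparent why the Hilbert cube is the right ambient space; the price is the bookkeeping you label ``standard'' (Borelness of $\mathcal S$, invariance and Borelness of the $Z$-set locus $\mathcal T$, continuity of the action), all of which do go through. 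The paper's route trades that verification for a one-line citation to a result proved by essentially the same method.
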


\begin{proof}
Let $E$ be the equivalence relation from the statement.
By Theorem \ref{Hilbertcube} we get $E_{G_\infty}\leq_B E$ and by \cite[Theorem 7.1]{LiPeng} we get that $E\leq_B E_{G_\infty}$. Thus $E$ is Borel bireducible with $E_{G_\infty}$.
\end{proof}

\begin{theorem}\label{Cantor}
The conjugacy relation of chaotic systems on the Cantor space is Borel bireducible with $E_{S_\infty}$.
\end{theorem}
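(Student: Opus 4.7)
The plan is to mirror Theorem \ref{Hilbertcube}, replacing the Hilbert cube by the Cantor space, nondegenerate absolute retracts by nondegenerate compact zero-dimensional metric spaces, and $E_{G_\infty}$ by $E_{S_\infty}$. Denote by $E$ the conjugacy relation of transitive homeomorphisms of $2^{\N}$.

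For the upper bound $E \leq_B E_{S_\infty}$, I would realize $E$ as the orbit equivalence relation of the Polish group $\mathcal H(2^{\N})$ acting by conjugation on the Polish subspace of transitive self-homeomorphisms. The key point is that $\mathcal H(2^{\N})$ is non-Archimedean: the pointwise stabilizers of finite clopen partitions of $2^{\N}$ form a neighborhood basis of the identity consisting of open subgroups, so $\mathcal H(2^{\N})$ embeds as a closed subgroup of $S_\infty$ via its action on a countable base of clopens. Orbit equivalence relations induced by Borel actions of closed subgroups of $S_\infty$ are Borel reducible to $E_{S_\infty}$, which yields $E \leq_B E_{S_\infty}$.

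For the reverse direction $E_{S_\infty} \leq_B E$, I would invoke the theorem of Camerlo and Gao asserting that the homeomorphism relation $H_0$ of nondegenerate compact zero-dimensional metric spaces is Borel bireducible with $E_{S_\infty}$, and define $f(K) := (K^{\Z}, \sigma)$. Since $K$ is compact metric zero-dimensional with $|K| \geq 2$, the product $K^{\Z}$ is a Cantor space and the shift is topologically mixing, hence transitive. The fixed-point argument from Theorem \ref{Hilbertcube} applies verbatim: any conjugacy sends $Fix(\sigma_K)$ onto $Fix(\sigma_L)$, and these are canonically homeomorphic to $K$ and $L$, giving $K \cong L$.

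As in Theorem \ref{Hilbertcube}, the output must finally be realized on the fixed Cantor space $2^{\N}$. Applying Proposition \ref{Burgesstheorem} to the action of $\mathcal H(2^{\N})$ on $\mathcal I(2^{\N}, (2^{\N})^{\Z})$ by precomposition, whose orbit equivalence relation is closed (orbits correspond to images) and hence smooth, produces a Borel assignment $K \mapsto \iota_K$ of a homeomorphism $\iota_K : 2^{\N} \to K^{\Z}$, and $g(K) := \iota_K^{-1} \circ \sigma \circ \iota_K$ is then the desired Borel reduction. I expect the only real subtlety to lie in the upper bound's verification that $\mathcal H(2^{\N})$ is non-Archimedean and that this suffices to reduce its orbit relations to $E_{S_\infty}$; the lower bound transfers essentially mechanically from the Hilbert cube case, given the $E_{S_\infty}$-completeness of homeomorphism on nondegenerate compact zero-dimensional metric spaces.
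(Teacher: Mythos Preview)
Your proposal is correct and essentially mirrors the paper's proof: both obtain the lower bound from the $E_{S_\infty}$-completeness (due to Camerlo--Gao) of homeomorphism of nondegenerate zero-dimensional compacta via the shift $K\mapsto (K^{\Z},\sigma)$, use the fixed-point argument to recover $K$, and then apply the Burgess selector to pass to a fixed copy of $2^{\N}$. The only difference is the upper bound: the paper simply cites the Camerlo--Gao theorem that conjugacy of Cantor maps is already $\leq_B E_{S_\infty}$, whereas you rederive this from the non-Archimedean structure of $\mathcal H(2^{\N})$; your argument is correct and is in fact what underlies the cited result.
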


\begin{proof}
The idea of the proof is very similar to the proof of Theorem \ref{Hilbertcube}. Let us denote by $E$ the conjugacy relation of chaotic Cantor space homeomorphisms.
First, by \cite{CamerloGao} it is known that conjugacy of Cantor maps is Borel bireducible with $E_{S_\infty}$.
Hence $E\leq_B E_{S_\infty}$.
It is also known that homeomorphism equivalence relation $H$ of closed subsets of the Cantor space is Borel bireducible with $E_{S_\infty}$ \cite{CamerloGao}.
We may easily suppose that $H$ is defined only for spaces containing at least two points.
Hence it is enough to show that $H\leq_B E$.

Let $f(K)=(K^{\Z}, \sigma_K)$ be the shift dynamical system.
Clearly $K^\Z$ is a zero-dimensional compact space without isolated points and thus it is homeomorphic to the Cantor space. The shift map is clearly transitive and has a dense set of periodic points.
As in the proof of Theorem \ref{Hilbertcube} we can argue that $f$ is a reduction and our coding argument could be transfered to a Borel reduction of $H$ to the conjugacy relation $E$ of chaotic homeomorphisms on a fixed Cantor space.
Consequently, $E\sim_B E_{S_\infty}$.
\end{proof}

\section{Minimal and chaotic invertible systems on cantoroids}

\begin{definition}
A cantoroid is a nonempty compact space without isolated points such that the union of all degenerate components is dense. A cantoroid is called tame if all nondegenerate components 
%are mutually homeomorphic and 
form a null sequence, i.e. the diameters tend to zero.
\end{definition}

\begin{theorem}\label{tamecantoroids}
The homeomorphism relation of tame cantoroids is Borel bireducible with $E_{G_\infty}$.
\end{theorem}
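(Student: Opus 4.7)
The plan is to prove both $H \leq_B E_{G_\infty}$ and $E_{G_\infty} \leq_B H$, where $H$ denotes the homeomorphism relation of tame cantoroids.

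For the upper bound, every tame cantoroid embeds in the Hilbert cube $Q$, and the set of tame cantoroids forms a Borel subset of $\mathcal K(Q)$: each clause of the definition (no isolated points, dense set of degenerate components, null sequence of nondegenerate components) translates to a Borel condition in the Vietoris topology, using standard facts about the Borel structure of the components map on $\mathcal K(Q)$. The homeomorphism relation on $\mathcal K(Q)$ coincides with the orbit equivalence relation of the natural Polish group action of $\mathcal H(Q)$ on $\mathcal K(Q)$ by pushforward, and is therefore Borel reducible to $E_{\mathcal H(Q)}$, hence to $E_{G_\infty}$.

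For the lower bound, I would use \cite{KrupskiVejnar}, which states that the homeomorphism relation of nondegenerate absolute retracts is Borel bireducible with $E_{G_\infty}$, so it suffices to Borel-reduce this relation to $H$. The idea, already announced in the paragraph preceding the theorem, is to assign to each nondegenerate absolute retract $K$ a tame cantoroid $X_K$ whose nondegenerate components are all homeomorphic to $K$ and form a dense null sequence. Concretely, fix once and for all a Cantor set $C\subseteq Q$ with a dense sequence $(d_n)_n$ of distinct points and pairwise disjoint open neighborhoods $U_n\ni d_n$ of shrinking diameter; given $K$, place a scaled copy $K_n$ of $K$ of diameter $1/n$ into $U_n$, attached to $C$ at $d_n$, and set $X_K:=C\cup\bigcup_n K_n$. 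The map $K\mapsto X_K$ is Borel since scaling, translation, and countable unions followed by closures are Borel operations on $\mathcal K(Q)$. The direction $X_K\cong X_L\Rightarrow K\cong L$ is immediate: any homeomorphism preserves components and their (non)degeneracy, and so restricts to a homeomorphism between a copy of $K$ and a copy of $L$.

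The main obstacle will be verifying the converse implication $K\cong L\Rightarrow X_K\cong X_L$, since the attachment of each $K_n$ to $C$ at a single distinguished point of $K$ need not be respected by an arbitrary homeomorphism $K\to L$, and attachment at topologically inequivalent points of $K$ can produce nonhomeomorphic spaces. I would handle this either by (i) enriching the construction so that at each $d_n$ one attaches a dense family of copies of $K$, one for every point in a fixed countable dense subset of $K$, and then building a homeomorphism $X_K\to X_L$ by a back-and-forth argument that matches copies according to their attachment types; or (ii) establishing a uniqueness theorem saying that any two tame cantoroids whose nondegenerate components are all homeomorphic to $K$ and form a dense null sequence are themselves homeomorphic, again via a back-and-forth on the Cantor-set component decomposition. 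Either resolution, combined with Borelness of the coding, yields the required Borel reduction.
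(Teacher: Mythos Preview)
Your lower-bound construction has two genuine gaps. First, $X_K = C \cup \bigcup_n K_n$ as you describe it is not a cantoroid: a point $p \in K_1$ away from the single attachment point $d_1$ is not a limit of degenerate components, since the only part of $X_K$ near $p$ is $K_1$ itself, so the density clause in the definition fails. Second, neither proposed repair for the implication $K \cong L \Rightarrow X_K \cong X_L$ works as written: in (i), gluing several copies of $K$ to the single point $d_n$ produces a component that is a wedge of copies of $K$, not a copy of $K$; and (ii) is precisely the nontrivial statement you would still have to prove. (A smaller issue: abstract homeomorphism of compacta in $Q$ does not coincide with the $\mathcal H(Q)$-orbit relation---take $Q$ itself versus a proper face---so your upper-bound argument should simply cite \cite{Zielinski} instead.)

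The paper sidesteps both problems with a different construction. It reduces from the homeomorphism relation of continua \cite{Zielinski,ChangGao} and builds $X_K$ as an inverse limit: $X_1$ is a metrizable compactification of $\N$ with remainder $K$, and $X_{n+1}$ is obtained from $X_n$ by replacing every isolated point by a small clopen copy of $X_1$. In the limit every point of every copy of $K$ is approached by degenerate components, so $X_K$ really is a tame cantoroid. For the forward implication the paper invokes the result \cite{Tsankov} that any two metrizable compactifications of $\N$ with homeomorphic remainders are homeomorphic; this yields $X_1 \cong X_1'$ whenever $K \cong K'$, and one then inductively builds compatible homeomorphisms $h_n:X_n \to X_n'$ commuting with the bonding maps, which pass to a homeomorphism of the inverse limits. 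The attachment-point obstruction never arises because $K$ appears as a remainder, approached from all sides, rather than being glued at a distinguished point.
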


\begin{proof}
The homeomorphism relation of all compacta resp. all continua is Borel bireducible with $E_{G_\infty}$ \cite{Zielinski, ChangGao}.
Thus it is enough to find a Borel reduction of the homeomorphism relation on continua to the homeomorphism relation on tame cantoroids.
Let $K$ be a nondegenerate continuum.
By an inverse limit we construct a tame cantoroid whose all nondegenerate components are homeomorphic to $K$.
Let $X_1$ be a compactification of $\N$ with remainder $K$. 
Suppose that we have already constructed $X_1,\dots, X_n$, $n\geq 1$. Let $X_{n+1}$ be a space obtained from $X_n$ in such a way, that all isolated points of $X_n$ are replaced by a small clopen copy of $X_1$. Note that there are countably many isolated points in $X_n$, so all of these are replaced by a null sequence of copies of $X_1$. A map $f_n$ naturally  shrinks all the inserted copies of $X_1$ in $X_{n+1}$ to the corresponding isolated points in $X_n$.
Let $X_K$ be the inverse limit of $(X_n, f_n)_{n\in\N}$. Note that there are essentially two kinds of points $(x_n)_{n\in\N}$ in $X_K$: those, for which $x_n$ is contained eventually in some copy of $K$ and those, for which $x_n$ is always an isolated point in $X_n$. The first kind of points forms a countable set of copies of $K$ and the second kind of points forms a dense set of degenerate components.

Let us verify that the map $K\mapsto X_K$ is a reduction of the homeomorphism equivalence relation into itself.
Suppose first that $K$ is homeomorphic to $K'$. Then $X_1$ is homeomorphic to $X_1'$ by some homeomorphism $h_1$ \cite{Tsankov}
and we can define inductively a sequence of homeomorphisms $h_n:X_n\to X_n'$ for which $h_n\circ f_n=f_n' \circ h_{n+1}$. Consequently, the inverse limits $X_K$ and $X_{K'}$ are homeomorphic by the map arising as the inverse limits of homeomorphisms $h_n$.
On the other hand, if $X_K$ is homeomorphic to $X_{K'}$ then their nondegenerate components are mutually homeomomorhic which yields $K$ homeomorphic to $K'$.

Let us roughly argue that the assignment $K\mapsto X_K$ can be coded in a Borel way.
By the Arsenin-Kunugui Uniformization Theorem \cite[Theorem 35.46]{Kechris},
we can select a countable dense sequence of Borel maps $b_n: \mathcal K(Q)\to Q$ such that 
$\{b_n(K): n\geq k\}$ is a dense subset of $K$, for every $k\in\N$. 
Using this, the compactification $X_1$ can be realized as a subset 
\[\tilde X_1=\{(x, 0): x\in K\}\cup\{(b_n(K), 2^{-n}): n\in\N\}\]
of $Q\times [0,1]$. Then we replace each isolated point $(b_n(K), 2^{-n})$ of $\tilde X_1$ by a small copy of $K$ rescaled by the factor $2^{-n}$ and moved to a subset $\tilde K_n\subseteq Q\times \{2^{-n}\}$. Consequently, using the sequence $b_n(K)$, we approximate each $\tilde K_n\times\{0\}$ by a sequence in $Q\times [0,1]\times (0,1]$ to get $\tilde X_2\subseteq Q\times [0,1]^2$ as a relalization of $X_2$.
We can continue inductively by realizing $X_n$ as a subset $\tilde X_n$ of $Q\times [0,1]^n$ and finally the inverse limit $(X_n, f_n)$ corresponds to the closure of the set $\bigcup_{n\in\N} \tilde X_n \subseteq Q\times Q$, where we identify $[0,1]^n$ with $[0,1]^n\times\{0\}^\N\subseteq Q$.
\end{proof}

\begin{theorem}\label{conjugacycantoroidschaotic}
The conjugacy relation of chaotic systems on cantoroids is Borel bireducible with $E_{G_\infty}$.
\end{theorem}

\begin{proof}
We use the same trick as in Section 3, with some subtle differences (which includes the absence of Burgess theorem). 
Given a continuum $K$, consider the corresponding tame cantoroid $X_K$ given by the proof of Theorem \ref{tamecantoroids}. Consider the shift system $(X_K^{\Z}, \sigma_K)$. Note that the system is chaotic and the base space $X_K^{\Z}$ is a cantoroid since it is a countable product of cantoroids
(see \cite{BalibreaDownarowiczHricSnohaSpitalsky}). Moreover the map $K\mapsto (X_K^{\Z}, \sigma_K)$ is a natural Borel reduction. 
Since the homeomorphism relation of continua is Borel bireducible with $E_{G_{\infty}}$ it follows that $E_{G_{\infty}}$ is Borel reducible to the conjugacy relation of chaotic invertible systems on cantoroids.

On the other hand, by \cite{RosendalZielinski}, the conjugacy relation of (chaotic) systems on cantoroids is Borel reducible to $E_{G_\infty}$.
\end{proof}

\begin{definition}
For a compact space $X$ we define an equivalence relation $E_{tt}(X)$ on $X^{\N}$ for which a sequence $(a_n)_{n\in\N}$ is equivalent to $(a_n')_{n\in\N}$ if and only if a subsequence $(a_{n_k})_{k\in\N}$ is convergent iff so is $(a_{n_k}')_{k\in\N}$ for every $n_1<n_2<\dots$.
\end{definition}

\begin{definition}
For an equivalence relation $E$ on $X$ the equivalence relation $E^+$ is defined on $X^{\N}$ in such a way that $(a_n)_{n\in\N}$ and $(a_n')_{n\in\N}$ are $E^+$-equivalent iff 
\begin{align*}
\forall n\in\N\ \exists m\in\N&: (a_n,a_m')\in E \ \text{ and} \\
\forall m\in\N\ \exists n\in\N&: (a_n,a_m')\in E.    
\end{align*}
\end{definition}

\begin{theorem}\label{tamecantoroidsminimal}
The conjugacy relation of minimal invertible systems on tame cantoroids, which are not homeomorphic to the Cantor space, is Borel reducible to $E_{tt}(Q)^+$.
\end{theorem}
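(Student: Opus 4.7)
The plan is to code each minimal system $(X,h)$ on a tame cantoroid $X$ (not the Cantor space) by the sequence of orbits of its nondegenerate components, each orbit viewed as a sequence in $\mathcal{K}(Q)\cong Q$. Fix once and for all such a homeomorphism together with a bijection $\sigma\colon\N\to\Z$, and identify $\mathcal{K}(Q)$ with $Q$ silently below. For each such $X$, the nondegenerate components form a null sequence in $\mathcal{K}(Q)$ and are enumerated Borelly as $(C_n(X))_{n\in\N}$, for instance by nonincreasing diameter with ties broken by a fixed ordering of $\mathcal{K}(Q)$. For each such $C$, set $\mathbf{s}(C)=(h^{\sigma(n)}(C))_{n\in\N}\in Q^{\N}$, and define
\[
\Phi(X,h)=(\mathbf{s}(C_n(X)))_{n\in\N}\in(Q^{\N})^{\N}.
\]
Then $\Phi$ is Borel. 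I would first record the structural fact that under these hypotheses every $h$-orbit of a nondegenerate component is infinite: otherwise the union of such an orbit would be a proper nonempty closed $h$-invariant subset of $X$, contradicting minimality together with the density of degenerate components.

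For the forward direction, suppose $\varphi\colon(X,h)\to(Y,g)$ is a conjugacy. The induced map $\varphi_*\colon\mathcal{K}(X)\to\mathcal{K}(Y)$ is a homeomorphism satisfying $\varphi_*(h^k(C))=g^k(\varphi(C))$, so the corresponding entries of $\mathbf{s}(C)$ and $\mathbf{s}(\varphi(C))$ are linked by this homeomorphism of compact subspaces of $\mathcal{K}(Q)$. Since homeomorphisms preserve convergence of sequences, $(\mathbf{s}(C),\mathbf{s}(\varphi(C)))\in E_{tt}(Q)$. As $\varphi(C)$ equals some $C_m(Y)$, the sets of $E_{tt}(Q)$-classes realized in $\Phi(X,h)$ and in $\Phi(Y,g)$ coincide, giving $(\Phi(X,h),\Phi(Y,g))\in E_{tt}(Q)^{+}$.

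The heart of the proof is the backward direction. Assume $(\Phi(X,h),\Phi(Y,g))\in E_{tt}(Q)^{+}$, fix a nondegenerate component $C$ of $X$, and pick $D$ in $Y$ with $(\mathbf{s}(C),\mathbf{s}(D))\in E_{tt}(Q)$; I aim to build a conjugacy $\tilde f\colon(X,h)\to(Y,g)$. Because $\sigma$ is a bijection, any infinite subsequence of $\mathbf{s}(C)$ consists of pairwise distinct components, and the null-sequence property forces their diameters to tend to $0$, so every convergent subsequence of $\mathbf{s}(C)$ converges to a singleton in $\mathcal{K}(Q)$. Combining the null-sequence property with minimality of $h$, one checks that for every $x\in X$ there is an increasing sequence $(n_k)$ with $h^{\sigma(n_k)}(C)\to\{x\}$, so the cluster set of $\mathbf{s}(C)$ in $\mathcal{K}(Q)$ is exactly the canonical embedded copy of $X$; the same holds for $\mathbf{s}(D)$ and $Y$. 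Given $x\in X$ and such $(n_k)$, the $E_{tt}(Q)$-equivalence forces $(g^{\sigma(n_k)}(D))_k$ to converge, and by the same diameter argument the limit is a singleton $\{y\}\subseteq Y$; set $\tilde f(x)=y$. A merging-subsequences argument gives well-definedness, a diagonal/compactness argument gives continuity, and the symmetric construction produces an inverse, so $\tilde f$ is a homeomorphism. Applying the same analysis to the shifted indices $\sigma^{-1}(\sigma(n_k)+1)$, reordered increasingly (which does not change the convergence of the underlying sequence in a compact space), yields $\tilde f\circ h=g\circ\tilde f$.

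The main obstacle is the backward direction: an $E_{tt}(Q)$-class records only which subsequences converge and not what they converge to, so there is no immediate way to recover a point map between $X$ and $Y$. What rescues the reconstruction is the particular shape of $\mathbf{s}(C)$ in this setting; the null-sequence property forces all subsequential limits in $\mathcal{K}(Q)$ to be singletons, and the minimality of $h$ guarantees that every point of $X$ appears as such a singleton limit, canonically identifying the cluster set of $\mathbf{s}(C)$ with $X$. The hypothesis that $X$ is not the Cantor space is essential, since it ensures at least one nondegenerate component is available to support the coding.
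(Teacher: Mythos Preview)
Your approach is correct and follows essentially the same strategy as the paper: code a minimal system by the orbits emanating from its nondegenerate components, and use tameness (null-sequence components) together with minimality to reconstruct the conjugacy from an $E_{tt}$-equivalence in the backward direction. The differences are minor variations rather than a new route: the paper records orbits of selected \emph{points} in $Q$ (via a Borel selector $s\colon\mathcal K(Q)\to Q$) and defers the backward reconstruction to a lemma of Li--Peng, whereas you record orbits of the \emph{components themselves} in $\mathcal K(Q)\cong Q$ and carry out the reconstruction explicitly; your encoding makes the forward direction a one-liner via $\varphi_*$, at the cost of invoking the homeomorphism $\mathcal K(Q)\cong Q$.

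One small technical point deserves correction: your proposed enumeration ``by nonincreasing diameter with ties broken by a fixed ordering of $\mathcal K(Q)$'' is not a valid Borel procedure, since there is no Borel linear order on $\mathcal K(Q)$ to break ties, and infinitely many components may share a diameter. The paper handles this in the standard way, applying the Lusin--Novikov uniformization theorem to the Borel set $\{(X,K):K\text{ is a nondegenerate component of }X\}$, whose vertical sections are countably infinite, to obtain Borel maps $b_n$ enumerating the components. With that replacement your argument goes through.
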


\begin{proof}
Let $\mathcal C$ be the collection of all tame cantoroids in $Q$ with infinitely many non-degenerate components (note that if a cantoroid, which is not homeomorphic to the Cantor space, admits a minimal homeomorphism, then it has infinitely many nondegenerate components).
First, we aim to find Borel maps which assigns to every $C\in\mathcal C$ a sequence $(K_n)_{n\in\N}$ of all components.
Consider the set 
\[\mathcal S:=\{(C, K):C\in\mathcal C, K \text{ is a nondegenerate component of } C\}.\]
The set $\mathcal S$ is Borel and it has countable infinite vertical sections. Hence by the Lusin-Novikov Uniformization Theorem \cite[Theorem 18.10]{Kechris} there is a sequence of Borel maps $b_n:\mathcal C\to\mathcal K(Q)$ such that $\{K: (C,K)\in \mathcal S\}=\{b_n(C): n\in\N\}$.
Let us fix a Borel selector $s:\mathcal K(Q)\to Q$
\cite[Theorem 12.13]{Kechris}.
Let 
\begin{align*}
f&: \{(C, h): C\in\mathcal C, h:C\to C\text{ is a minimal homeomorphism}\} \to (Q^\N)^\N, \\
f(C, h)&=\left(\bigl(h^k(s(b_n(C))))\bigr)_{k\in \N}: {n\in\N}\right).
\end{align*}
Clearly $f$ is Borel. Let us argue that $f$ is a reduction.

Suppose first that $(C, h)$ is conjugate to $(C', h')$ by $\varphi: C\to C'$. 
%If $K=b_n(C)$ is nondegenerate component of $C$ then the set $(h^k(K))_{k\in\N}$ form a null sequence, since $C$ is tame.
Then $\varphi(K)$ is a nondegenerate component of $C'$ and thus $\varphi(K)=b_m(C')$ for some $m\in\N$.
Since 
\begin{align*}
\varphi(h^k(s(K))) = h'^k(\varphi(s(K)))&\in h'^k(\varphi(K)), \\
h'^k(s(\varphi(K))))&\in h'^k(\varphi(K)),
\end{align*}
it follows that the distance of $\varphi(h^k(s(K)))$ and $h'^k(s(\varphi(K)))$ converges to 0 as $k\to\infty$. It follows that  both the sequences have convergent subsequences indexed by the same sequences of indices.
Since moreover $\varphi$ is a homeomorphism it follows that
 $h^k(s(K))_{k\in\N}$ and $h'^k(s(\varphi(K)))_{k\in\N}$ are $E_{tt}$-equivalent.
 Since we can go through the same argumentation starting with a component of $C'$,
 it follows that $f(C, h)$ and $f(C', h')$ are $E_{tt}(Q)^+$-equivalent.

For the opposite suppose that $f(C, h)$ and $f(C', h')$ are $E_{tt}(Q)^+$-equivalent. Then there is (at least one) nondegenerate component $K=b_n(C)$ of $C$.
By the definition of $E_{tt}(Q)^+$ it follows that there exists some $m\in\N$ for which the sequences $h'^k(s(b_m(C')))_{k\in\N}$ and $h^k(s(b_n(C)))_{k\in\N}$ are $E_{tt}$-equivalent.
Consequently, it is straightforward (and formally follows by \cite[Lemma 4.2]{LiPeng}) that $(C, h)$ and $(C', h')$ are conjugate.
\end{proof}

\section{Chaotic 1-dimensional systems}
Transitive invertible systems on the circle are classified by the rotation number and thus the conjugacy relation is smooth. On the interval, there are no transitive invertible systems available. Thus instead of invertible systems we consider noninvertible systems on the interval and on the circle in this section. It is well known that transitive interval systems are chaotic, i.e. have a dense set of periodic points (see e.g. \cite[Proposition 7.2]{Ruette}).

Let $I=[0,1]$. For any closed set $F\subseteq I$ we denote as $\Acc(F)$ the set of points $x\in F$ for which there is $\varepsilon>0$ such that $(x-\varepsilon,x)\cap F=\emptyset$ or $(x,x+\varepsilon)\cap F=\emptyset$. 
Let $\T$ be the collection of all transitive continuous self maps of $I$ and $E$ be the conjugacy relation on members of $\T$.
Assign to every $f\in \T$ the set
\[P_f=\bigcup_{n\in\N} \Acc(\Fix(f^n)).\]
The set $P_f$ does not contain all the periodic points necessarily, but it is dense (as well as the set of all periodic points). To see this it is enough to realize that 
$\Fix(f^n)$ has empty interior (if $\Fix(f^n)$ has nonempty interior, $f$ will be periodic on some interval which violates transitivity of $f$).
Consequently $\Acc(\Fix(f^n))$ is dense in $\Fix(f^n)$.

Let
\[F(f)=\{(f^n(x_i) ? f^m(x_j))_{i, j=1,\dots, k, m, n\in \N}: x_1,\dots,x_k\in P_f\},\]
where the symbol $?$ is a binary operation which attains naturally the values $\{<, =, >\}$.
Thus $F(f)$ is a countable subset of the Polish space 
$X=\bigcup_{k\in\N} \{<, =, >\}^{k\times k\times\N\times\N}$,
where $\{<, =, >\}$ is a discrete space with three points.

\begin{proposition}\label{conjugacyoftransintervalmaps}
Two maps $f, g\in \T$ are conjugate by a sense preserving homeomorphism if and only if $F(f)=F(g)$,
i.e. the map $F$ is a reduction of the conjugacy relation on $\T$ to the equality relation of countable subsets of $X$.
\end{proposition}

\begin{proof}
Clearly, any increasing homeomorphism $h$ of $[0,1]$, which conjugates $f, g\in\T$ has to map $\Fix f^n$ onto $\Fix g^n$ and thus also $\Acc(\Fix\ f^n)$ onto $\Acc(\Fix\ g^n)$. Thus $h$ maps $P_f$ onto $P_g$ and preserves the relations $<, =, >$. Thus $F(f)=F(g)$.

On the other hand, if $F(f)=F(g)$ for some $f, g\in\T$, then we may find a bijection $b: P_f\to P_g$ which preserves the order. Since both $P_f$ and $P_g$ are dense, there is an increasing homeomorphism $h:I\to I$ extending $b$. 

Since $b$ conjugates $f| P_f$ and $g| P_g$ it follows by continuity that $h$ conjugates $f$ and $g$.
\end{proof}

% Chaotic rim-finite systems?

It remains to argue that there is a natural way to encode the range of $F$ so that we get a Borel reduction. This is done in the following proof. We use the notation $\N^{<\omega}$ for the set of all finite sequences of natural numbers.

\begin{theorem}\label{conjugacyinterval}
The sense-preserving conjugacy relation of transitive systems on the interval is Borel reducible to the universal $\Pi^0_3$-relation induced by $S_\infty$, known also as the equality of countable sets, i.e. the $E_=^+$ relation.
\end{theorem}

\begin{proof}
Consider the set 
\[M=\{(f, x)\in \T\times I: x\in P_f\}.\]

This is a Borel set with countable infinite vertical sections.
Hence, by Lusin-Novikov uniformization theorem \cite[Theorem 18.10]{Kechris},
there exist Borel maps $c_n: \T\to [0,1]$ such that 
the union of graphs of $c_n$ equals to the set $M$.
Define a map $G:\T\to X^{\N^{<\omega}}$ as follows:

\[G(f)= ((f^n(x_i) ? f^m(x_j))_{i, j=1,\dots, k, m, n\in \N}: x_1=c_{l_1}(f),\dots,x_k=c_{l_k}(f))_{(l_1,\dots, l_k)\in \N^{<\omega}}.\]
Clearly $G$ is a Borel map.
Note that the difference between $G$ and $F$ is that the range of $G$ is formed by countable indexed families whereas the range of $F$ is formed by countable sets. Also $F(f)=F(g)$ if and only if $G(f)=_+G(g)$.
Consequently, by Proposition \ref{conjugacyoftransintervalmaps}
we get that $G$ is a Borel reduction into $E_=^+$.
%Let $f, g\in S$. If $f$ and $g$ are conjugate, then $F(f)=F(g)$. On the other hand if $F(f)=F(g)$, we want to argue that $f$ and $g$ are conjugate.
%Let $x\in I$ be a transitive point for $f$.
%Considering $k=1$

%Since $F(f)=F(g)$, there is a point $y\in I$ for which $f^n(x)\leq f^m(x)$ if and only if $g^n(y)\leq g^m(y)$, $n,m\in\N$.

\end{proof}

\begin{theorem}\label{transitivecircle}
Conjugacy relation of transitive circle maps is Borel reducible to $E_{=^+}$.
\end{theorem}

\begin{proof}
We distinguish two kinds of transitive circle maps.
Those with periodic points and those without any periodic point.

Any transitive circle map without periodic points is a homeomorphism by \cite[Corollary 2]{AuslanderKatznelson}
and thus it is topologically conjugate to an irrational rotation. Here the rotation number (see e.g. \cite[Chapter 7.1]{BrinStuck}) is witnessing that this part is being smooth.

Thus it is enough to consider only transitive circle maps with at least one periodic point.
For such maps it is known that the set of all periodic points is dense \cite[Corollary 3.4]{CovenMulvey}.
Using a similar procedure as in the proof of the theorem above it can be proved that continuous circle maps are Borel reducible to $E_{=^+}$.
\end{proof}

% Conjugacy of transitive maps on rim-finite continua?

By \cite{NagarSesha}, there are uncountably many pairwise nonconjugate transitive maps on $\R$. More precisely, the following is proved there.

\begin{theorem}\label{Nagar}
For every $a\in (\N\setminus\{1\})^{\Z}$ let $f_a:\R\to\R$ be a map for which 
\[f_a(x)=\begin{cases}
n-a_n, \text{ if } x=n\in\Z \text{ is odd}, \\
n+a_n, \text{ if } x=n\in\Z \text{ is even}
\end{cases} \]
and $f_a$ to be linear in every interval $[n,n+1]$, $n\in\Z$. 
Then each map $f_a$ is transitive and $f_a$ is conjugate to $f_b$ if and only if \begin{align}\label{eq:shift}
a=\sigma^k(b) \text{ for some } k\in\Z,
\end{align}

where $\sigma:\N^Z\to\N^\Z$ is the shift map.
\end{theorem}

\begin{corollary}\label{cor:chaoticinterval}
The relation $E_{0}$ is Borel reducible to the conjugacy relation of chaotic systems $[0,1]$.
\end{corollary}

\begin{proof}
Consider the maps $f_a$ given by Theorem \ref{Nagar}, but only for indices $a$ from the set $Z=(\{2,3\})^{\Z}$. Let $g_a:(0,1)\to (0,1)$ be a conjugate of $f_a$. Clearly, since $a$ is a bounded sequence, the map $f_a$ has limit $+\infty$ at $+\infty$ and $-\infty$ at $-\infty$. Consequently we may extend each $g_a$ to a continuous map $\bar g_a:[0,1]\to [0,1]$ so that $\bar g_a(0)=0$ and $\bar g_a(1)=1$. Note that each $\bar g_a$ remains transitive and that $\bar g_a$ is conjugate to $\bar g_b$ if and only if $f_a$ is conjugate to $f_b$.

On the set $Z$ we consider the equivalence relation given by $a\sim b$ if there exists $k\in \Z$ for which $a=\sigma^k(b)$ (see Equation \ref{eq:shift}).
The map $\varphi: a\mapsto \bar g_a$ is a reduction by Theorem \ref{Nagar}.
Since transitivity on the interval forces a dense set of periodic points, each of $\bar g_a$ is chaotic.
As a consequence of \cite[Exercise 6.1.7 and ]{Gao}, we get that that $E_0\leq_B E$ which completes the proof.
\end{proof}

\begin{remark}
We can obtain a similar result as in the previous Corollary also for circle maps. It would be enough to consider a one-point compactification of $(0,1)$ instead of the two-point compactification.
\end{remark}

%The relation $E_{=^+}$ is Borel reducible to the conjugacy relation of transitive circle maps.
%\begin{proof}
%Let $f: S\to S$ be an irrational rotation of the circle.
%with angle $\alpha\in (0,1)$ and let $n\in\N$ be the smallest for which $n\alpha >1$.
%Pick a topological copy of the Cantor set $C\subseteq S$ such that different points in $C$ belong to different orbits of $f$. 
%Let $X=\{(a_k)_{k\in\N}: k\neq l\implies a_k\neq a_l\}$.

%For any $\Vec{a}=(a_k)_{k\in\N}\in X$ we let $f_{\Vec{a}}$ to be the Denjoy homeomorphism corresponding to the set $\{a_k: k\in\N\}$, ie.
%$f_{\vec{a}}:S\to S$ is an extension of $f$ with the property that ...
%Our aim is to modify the homeomorphism $f_A$ on the set of wandering intervals to get a transitive map $g_A$. If $J=\varphi^{-1}(f^n(a))$ is one of the wandering intervals for some ${\Vec{a}}\in A$, $a=a_k$, $k\in\N$ and $n\in\Z$, define $\hat{J}$ to be the maximal open interval which contains $J$ and which is disjoint with all the sets $f_A^{i}(J)$, $1\leq |i|\leq |n|+k$. Modify $f_A$ continuously on $J$ in such a way that $g_A$ maps $J$ onto the interval $\hat{J}$ in a zig-zag way.   Continuous????
%Finally, $A\mapsto g_A$ is the wanted reduction. \end{proof}

%Note that by \cite{BruinVejnar}, the conjugacy relation of interval maps is Borel bireducible with $E_{S_\infty}$.
%Also it is well known that the conjugacy relation of transitive orientation-preserving homeomorphisms of the circle is smooth, since the rotation number is a complete invariant. 

\begin{remark}
There are no transitive homeomorphisms on non-degenerate dendrites. 
Suppose for contradiction that there is a dendrite $X$ with a transitive homeomorphisms $h$. Find a fixed point $x\in X$ of $h$. Clearly $X\setminus \{x\}$ might have only finitely many components, since otherwise transitivity of $h$ would be violated. So let $n\in\N$ be the number of components. Then $h$ permutes these components cyclically and thus $h^n$ maps each of this component into and also onto itself in a transitive way. Pick one of the components and let $C$ be its closure. Also let $f:=h^n|C$. Then $x$ is an end point of $C$. Let $A\subseteq C$ be any arc whose one end point is $x$. Then $B:=f(A)\cap A$ is again a nondegenerate arc with end point $x$ and either $f(B)\subseteq B$ or $f(B)\supseteq B$. We may suppose that $B$ is proper subset of $C$.
%Now, we distinguish two cases: if there is a fixed point $y\in C, y\neq x$ then $f([x,y])=[x,y]$ which contradicts transitivity of $f$. If there is no fixed point in $B$ then every point $z$
The set $S:=\{z\in C: [z,x]\cap (x,y)\neq\emptyset\}$ is a component of $C\setminus \{x,y\}$ and thus it is open. It follows that either $f(S)\subseteq S$ or $f(S)\supseteq S$. In any case we get a contradiction with transitivity of $f$.

On the other hand, every dendrite admits a chaotic map since even every locally connected continuum admits such a map by an unpublished result of Karasová and Vejnar (see \cite{AgronskyCeder} for a similar result on locally connected continua in $\R^n$.)
We propose that the conjugacy relation of chaotic systems on dendrites is Borel reducible to $E_{S_\infty}$.
\end{remark}

\section*{Acknowledgement}
The author is grateful to Dominik Kwietniak for some discussions on transitive maps on the interval.
\bibliographystyle{alpha}
\bibliography{citace.bbl}
\end{document}